\documentclass[12pt,a4paper,twoside]{article}
\usepackage{amsmath,amsthm,amssymb,amsfonts,amsbsy}

\usepackage{color, colortbl} 
\usepackage{float} 
\usepackage[margin=2.5cm]{geometry} 
\usepackage{cite}
\usepackage{authblk} 

\theoremstyle{definition}

\theoremstyle{plain}

\newtheorem{lem}[subsection]{Lemma}
\newtheorem{prop}[subsection]{Proposition}


\newcommand{\beq}{\begin{eqnarray}}
\newcommand{\eeq}{\end{eqnarray}}

\newcommand{\beqs}{\begin{eqnarray*}}
\newcommand{\eeqs}{\end{eqnarray*}}

\allowdisplaybreaks


\usepackage{mathtools}


\title{\bf A Note on the Modified Albertson Index}

\author{
   \large Shumaila Yousaf$^{a}$, Akhlaq Ahmad Bhatti$^{a}$, Akbar Ali$^{b}$
}


\affil{ \normalsize
    { \it Department of Sciences and Humanities, \\ \it National University of Computer and Emerging Sciences,\\ \it B-Block, Faisal Town, Lahore, Pakistan}\\
    E-mail: {\tt shumaila.yousaf@uog.edu.pk, akhlaq.ahmad@nu.edu.pk}\\
    \vspace{2mm}
    { \it Knowledge Unit of Science,\\  \it University of Management and Technology, Sialkot-Pakistan}
    \\E-mail: {\tt akbarali.maths@gmail.com}

}


\begin{document}
\maketitle

\begin{abstract}
The \textit{modified Albertson index}, denoted by $A\!^*\!$, of a graph $G$ is defined as $A\!^*\!(G)=\sum_{uv\in E(G)} |(d_{u})^{2}- (d_{v})^{2}|$, where $d_u$, $d_v$ denote the degrees of the vertices $u$, $v$, respectively, of $G$ and $E(G)$ is the edge set of $G$. In this note, a sharp lower bound of $A\!^*$ in terms of the maximum degree for the case of trees is derived. The $n$-vertex trees having maximal and minimal $A\!^*$ values are also characterized here. Moreover, it is shown that $A\!^*\!(G)$ is non-negative even integer for every graph $G$ and that there exist infinitely many connected graphs whose $A\!^*$ value is $2t$ for every integer $t\in\{0,3,4,5\}\cup\{8,9,10,\cdots\}$.

\end{abstract}

\maketitle

{\bf Keywords:} Irregularity; Albertson index; modified Albertson index.

{\bf 2010 Mathematics Subject Classification:} 	05C07.

\section{Introduction}
All the graphs considered in this note are simple and finite. Sets of vertices and edges of a graph $G$ will be denoted by $V(G)$ and $E(G)$, respectively. Degree of a vertex $u$ and the edge connecting the vertices $u,v\in V(G)$ will be denoted by $d_u$ and $uv$, respectively. Let $N(u)$ be the set of all those vertices of $G$ which are adjacent to $u$. By an $n$-vertex graph, we mean a graph with $n$ vertices. The graph theoretical terminology not defined here, can be found from some standard books of graph theory, like \cite{Harary-69,Bondy-08}.

For a graph $G$, the imbalance of the edge $uv\in E(G)$, denoted by $imb(uv)$, is defined as $|d_u-d_v|$. The idea of the imbalance of an edge was actually appeared implicitly in \cite{Albertson-91} within the study of Ramsey graphs. Using the concept of imbalance, Albertson \cite{2} defined the following graph invariant
$$
A(G)=\sum_{uv\in E(G)} imb(uv)\,
$$
and named it as the \textit{irregularity} of $G$; however, several researchers \cite{Gutman-05,Hansen-05,Reti-MATCH-18,Matejic-18,Ali-16,Furtula-13} referred it as the \textit{Albertson index} and we do the same in this paper. Detail about the mathematical properties of the Albertson index can be found in the recent papers \cite{Chen-18,Ashrafi-19,Nasiri-19,Nasiri-IJMC-18} and related references listed therein.

This note is devoted to establish some properties of the following modified version of the Albertson index
$$
A\!^*\!(G)=\sum_{uv\in E(G)} |(d_{u})^{2}- (d_{v})^{2}|\,.
$$
We propose to call the graph invariant $A\!^*$ as the \textit{modified Albertson index}.

\section{Main Results}

Firstly, we prove two results concerning the modified Albertson index of trees; one of these results is related to a sharp lower bound of $A\!^*$ in terms of maximum degree and the second one is an extremal result, in which we characterize the $n$-vertex trees having maximal and minimal $A\!^*$ values.

\begin{prop}\label{thm-new-2}
If $T$ is a tree with maximum degree $\Delta$ then $A\!^*\!(T)\ge \Delta(\Delta^2 - 1)$ with equality if and only if $T$ is isomorphic to either a path or a tree containing only one vertex of degree greater than 2.

\end{prop}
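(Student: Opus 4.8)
The plan is to prove the inequality by packing edge-disjoint paths and then to read off the equality case from precisely when that packing is tight.

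First, for the lower bound, let $v$ be a vertex of maximum degree $\Delta$ and root $T$ at $v$. The key observation is that for squared degrees the triangle inequality telescopes: along any path $v = u_0, u_1, \dots, u_m = \ell$ from the root to a leaf $\ell$,
$$\sum_{i=1}^m \bigl| d_{u_i}^2 - d_{u_{i-1}}^2 \bigr| \ge \bigl| d_v^2 - d_\ell^2 \bigr| = \Delta^2 - 1,$$
since $d_v=\Delta$ and $d_\ell=1$. Now $v$ has exactly $\Delta$ neighbours, so $T-v$ has $\Delta$ components; choosing one root-to-leaf path through each component yields $\Delta$ pairwise edge-disjoint $v$-to-leaf paths $Q_1,\dots,Q_\Delta$. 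Writing $|d_u^2-d_w^2|$ for the contribution of an edge $uw$, every such summand is non-negative and each edge lies in at most one $Q_i$, so discarding the edges outside $\bigcup_i Q_i$ can only decrease the sum, giving
$$A^*(T) \ge \sum_{i=1}^{\Delta} \sum_{uw \in Q_i} |d_u^2 - d_w^2| \ge \Delta(\Delta^2-1).$$

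Second, I would verify that the two families attain the bound by a direct computation, which I expect to be routine. For a path every interior edge joins two degree-$2$ vertices and contributes $0$, while the two end edges contribute $|2^2-1^2|=3$ each, giving $A^*=6=2(2^2-1)$. For a tree with a single vertex $v$ of degree $\Delta>2$, each of the $\Delta$ legs is a path; along a leg the edge at $v$ contributes $\Delta^2-d^2$ with $d\in\{1,2\}$, the far edge contributes $0$ or $3$, and interior edges contribute $0$, so each leg contributes exactly $\Delta^2-1$ and the total is $\Delta(\Delta^2-1)$.

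Third, and this is the heart of the argument, I would characterise equality by exploiting that the displayed bound holds for \emph{every} admissible path system. If $A^*(T)=\Delta(\Delta^2-1)$, then for any choice of edge-disjoint $v$-to-leaf paths $Q_1,\dots,Q_\Delta$ the two inequalities must be simultaneously tight; in particular every edge outside $\bigcup_i Q_i$ must satisfy $|d_u^2-d_w^2|=0$, i.e. have endpoints of equal degree. Suppose, for contradiction, that some vertex $u\neq v$ has $d_u\ge 3$. Then $u$ has at least two children $c_1,c_2$ in the rooted tree; routing the path of the relevant component through $u$ and then through $c_1$ leaves the edge $uc_2$ outside the system, forcing $d_{c_2}=d_u\ge 3$. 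Since $c_2\neq v$ it again has two children, and repeating the construction produces an infinite strictly descending chain of vertices of degree $\ge3$, which is impossible in a finite tree. Hence every vertex other than $v$ has degree at most $2$: if $\Delta\le 2$ this forces $T$ to be a path, and if $\Delta\ge 3$ it makes $v$ the unique vertex of degree greater than $2$, which together with the second step yields the claimed characterisation. The main obstacle is exactly this equality analysis, since a single path system only constrains its own edges; the trick is to use the freedom to re-route the system through any prescribed edge, and then to drive the descending-chain recursion to a contradiction.
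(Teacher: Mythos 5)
Your argument is correct, and for the inequality itself it follows the paper's route exactly: fix a vertex $v$ of degree $\Delta$, extract $\Delta$ edge-disjoint $v$-to-leaf paths (one through each neighbour of $v$), telescope $|d_u^2-d_w^2|$ along each to get $\Delta^2-1$ per path, and discard the remaining edges. Where you genuinely diverge is the equality analysis. The paper observes that each path's telescoping bound is tight precisely when the degrees decrease monotonously from $v$ to the leaf and then asserts that the characterisation follows; it does not address the edges of $T$ lying outside the chosen path system, which in general do not cover $E(T)$, nor does it explain how monotone decrease plus the off-path condition forces the path/spider structure. Your proof closes exactly that gap: because the bound holds for \emph{every} admissible path system, you may re-route the system through any prescribed edge, which forces every off-system edge to join vertices of equal degree, and the descending-chain argument then excludes any vertex $u\neq v$ of degree at least $3$ in a finite tree. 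Together with your direct computation that paths and spiders attain $\Delta(\Delta^2-1)$ (also left implicit in the paper), this gives a complete two-sided characterisation. The cost is a somewhat longer argument; the benefit is that the structural conclusion is actually derived rather than asserted.
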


\begin{proof}
The result is obvious for $\Delta\le2$ and hence we assume that $\Delta\ge3$. If $v\in V(T)$ has maximum degree then there are $d_v$ pendant vertices namely $w_1,w_2,\cdots,w_{d_v}\,$ in $T$ such that the paths $v-w_1,v-w_2,\cdots,v-w_{d_v}\,$ are pairwise internally disjoint. If the path $v-w_1$ has length greater than 1, suppose that $w_{1,1},w_{1,2},\cdots, w_{1,r}$ are the internal vertices of the path $v-w_1$. Then,
\begin{align*}
&\big|(d_v)^2-(d_{w_{1,1}})^2\big| + \big|(d_{w_{1,1}})^2-(d_{w_{1,2}})^2\big| +\cdots + \big|(d_{w_{1,r}})^2 - (d_{w_{1}})^2\big|\\
&\ge \big[(d_v)^2-(d_{w_{1,1}})^2\big] + \big[(d_{w_{1,1}})^2-(d_{w_{1,2}})^2\big] +\cdots + \big[(d_{w_{1,r}})^2 - (d_{w_{1}})^2\big]=\Delta^2 -1\,.
\end{align*}
We note that the equality
\[
\big|(d_v)^2-(d_{w_{1,1}})^2\big| + \big|(d_{w_{1,1}})^2-(d_{w_{1,2}})^2\big| +\cdots + \big|(d_{w_{1,r}})^2 - (d_{w_{1}})^2\big| = \Delta^2 -1
\]
holds if and only if the degrees of successive vertices along the path from $v$ to $w_1$ decrease monotonously (not
necessarily strictly). Similarly, for $i=2,\cdots,r$, the sum of contributions of edges to $A\!^*\!(T)$ along the path $v-w_i$ is at least $\Delta^2 -1$ with equality if and only if the degrees of successive vertices along the path from $v$ to $w_i$ decrease monotonously (not necessarily strictly), and hence the desired result follows.

\end{proof}

\begin{prop}\label{thm-new-3}
For $n\ge5$, if $T$ is an $n$-vertex tree different from the path $P_n$ and star $S_n$, then $A\!^*\! (P_n) < A\!^*\! (T) <A\!^*\! (S_n)$.

\end{prop}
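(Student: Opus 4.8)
The plan is to pin down the two extremal values and then bound $A^{*}(T)$ strictly between them, treating the lower and upper inequalities separately. First I would record the anchor values. The path $P_n$ has exactly two pendant edges, each contributing $|1^2-2^2|=3$, while every internal edge joins two degree-$2$ vertices and contributes $0$, so $A^{*}(P_n)=6$. The star $S_n$ has $n-1$ edges, each joining the centre of degree $n-1$ to a leaf and contributing $(n-1)^2-1$, so $A^{*}(S_n)=(n-1)\big[(n-1)^2-1\big]=n(n-1)(n-2)$.

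For the lower inequality I would invoke Proposition \ref{thm-new-2}. The key structural observation is that a tree with maximum degree $\Delta\le 2$ is necessarily a path, so any $T\ne P_n$ satisfies $\Delta\ge 3$. Proposition \ref{thm-new-2} then gives $A^{*}(T)\ge \Delta(\Delta^2-1)\ge 3\cdot 8=24>6=A^{*}(P_n)$, which is exactly the required strict lower bound. Note that this uses only the inequality in Proposition \ref{thm-new-2}, not its equality case.

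For the upper inequality the main idea is a per-edge estimate. On any edge $uv$ of a tree with maximum degree $\Delta$ we have $|d_u^2-d_v^2|=d_{\max}^2-d_{\min}^2\le \Delta^2-1$, the minimum degree on an edge being at least $1$; since a tree has exactly $n-1$ edges, this yields $A^{*}(T)\le (n-1)(\Delta^2-1)$. The decisive point is that $\Delta=n-1$ forces $T=S_n$: a degree-$(n-1)$ vertex in an $n$-vertex tree is adjacent to all other vertices and already accounts for all $n-1$ edges, so every remaining vertex must be a leaf. Hence $T\ne S_n$ implies $\Delta\le n-2$, and then $A^{*}(T)\le (n-1)\big[(n-2)^2-1\big]<(n-1)\big[(n-1)^2-1\big]=A^{*}(S_n)$, giving the strict upper bound.

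The computations here are routine; the only place that requires genuine care is guaranteeing \emph{strictness}, and this is where the two degree characterisations do the real work: among trees, $\Delta\le 2$ means a path, and among $n$-vertex trees, $\Delta=n-1$ means the star. Once these are in place, the chains $A^{*}(P_n)=6<24\le A^{*}(T)$ and $A^{*}(T)\le (n-1)\big[(n-2)^2-1\big]<A^{*}(S_n)$ close the argument for every $T\notin\{P_n,S_n\}$; the hypothesis $n\ge 5$ is precisely what guarantees that such a tree $T$ exists.
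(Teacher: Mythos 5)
Your proposal is correct and follows essentially the same route as the paper: the lower bound is obtained from Proposition~\ref{thm-new-2} (via $\Delta\ge 3$ for $T\ne P_n$), and the upper bound from a per-edge estimate that ultimately rests on $T$ having no vertex of degree $n-1$. The only cosmetic difference is that the paper gets strictness edge-by-edge (each edge contributes strictly less than $(n-1)^2-1$), whereas you get it in the final numerical comparison via $\Delta\le n-2$; both are valid.
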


\begin{proof}
The inequality $A\!^*\! (P_n) < A\!^*\! (T)$ follows from Proposition \ref{thm-new-2}. To prove the inequality $A\!^*\! (T) <A\!^*\! (S_n)$, we note that for any two vertices $u,v\in V(T)$, it holds that $|(d_u)^2 - (d_v)^2| \le |(n-1)^2 - 1|$ with equality if and only if one of the vertices $u,v$ has degree 1 and the other has degree $n-1$. But, $T$ does not contain any vertex of degree $n-1$ and hence
\[
A\!^*\! (T)=\sum_{uv\in E(T)}|(d_u)^2 - (d_v)^2| < (n-1)|(n-1)^2 - 1|= A\!^*\! (S_n).
\]

\end{proof}

Let $u$ be a fixed vertex of $G$. We partition the set $N(u)$ as follows: $L(u)=\{v\in N(u):~d_v<d_u\}$, $E(u)=\{v\in N(u):~d_v=d_u\}$ and $G(u)=\{v\in N(u):~d_v>d_u\}$. The number of elements in $L(u)$, $E(u)$ and $G(u)$ ar denoted by $l_u$, $e_u$ and $g_u\,$, respectively. Clearly, $d_u=l_u+e_u+g_u\,$.
Now, we will prove that the modified Albertson index $A\!^*$ is non-negative even integer for every graph; but, before proving this fact, we derive the following useful result first.

\begin{lem}\label{thm-new-1}
If $u$ and $v$ are non-adjacent vertices in a graph $G$ such that $d_u\ge d_v$ then
\[
A\!^*\!(G+uv) = A\!^*\!(G) + 3d_u(d_u+1) + d_v(d_v-1) -2[(2d_u+1)g_u + (2d_v+1)g_v]\,.
\]

\end{lem}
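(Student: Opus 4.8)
The plan is to track exactly how the sum defining $A\!^*$ changes when we insert the edge $uv$. Adding $uv$ raises $d_u$ to $d_u+1$ and $d_v$ to $d_v+1$, while every other degree is unchanged; the edge set grows by the single new edge $uv$. So $A\!^*\!(G+uv) - A\!^*\!(G)$ splits naturally into three contributions: the brand-new term $|(d_u+1)^2 - (d_v+1)^2|$ coming from the edge $uv$ itself, the change in the terms of edges incident to $u$, and the change in the terms of edges incident to $v$. No other edges are affected, so these are the only sources of change.

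First I would handle the new edge. Since $d_u \ge d_v$ we have $d_u + 1 \ge d_v + 1$, so $|(d_u+1)^2 - (d_v+1)^2| = (d_u+1)^2 - (d_v+1)^2 = (d_u^2 + 2d_u + 1) - (d_v^2 + 2d_v + 1)$. Next I would examine an edge $uw$ incident to $u$ with neighbour $w \ne v$. Its old contribution is $|d_u^2 - d_w^2|$ and its new contribution is $|(d_u+1)^2 - d_w^2|$. The key observation is that the sign of $d_u - d_w$ controls the absolute value: if $w \in L(u) \cup E(u)$ (so $d_w \le d_u$) then both quantities equal their unsigned differences and the increment is exactly $(d_u+1)^2 - d_u^2 = 2d_u + 1$; if $w \in G(u)$ (so $d_w > d_u$, hence $d_w \ge d_u + 1$) then both terms are $d_w^2$ minus the square, and the increment is $d_u^2 - (d_u+1)^2 = -(2d_u+1)$. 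Summing over all $d_u$ neighbours $w \ne v$ — there are $l_u + e_u$ of the first kind and $g_u$ of the second — the total change at $u$ is $(2d_u+1)(l_u + e_u) - (2d_u+1)g_u = (2d_u+1)(d_u - 2g_u)$, using $l_u + e_u + g_u = d_u$. The same reasoning at $v$ yields $(2d_v+1)(d_v - 2g_v)$.

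Adding the three pieces, the total increment is
\[
(d_u^2 + 2d_u + 1) - (d_v^2 + 2d_v + 1) + (2d_u+1)(d_u - 2g_u) + (2d_v+1)(d_v - 2g_v).
\]
Expanding and collecting the $u$-terms gives $d_u^2 + 2d_u + (2d_u+1)d_u = 3d_u^2 + 3d_u = 3d_u(d_u+1)$, and the $v$-terms give $-d_v^2 - 2d_v + (2d_v+1)d_v = d_v^2 - d_v = d_v(d_v-1)$; the remaining $g$-terms combine into $-2[(2d_u+1)g_u + (2d_v+1)g_v]$, which matches the claimed formula exactly.

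I expect the main obstacle to be bookkeeping rather than any deep idea: one must be careful that the partition $N(u) = L(u) \cup E(u) \cup G(u)$ is the partition in the \emph{original} graph $G$ (before adding $uv$), and that $v \notin N(u)$ since $u,v$ are non-adjacent, so $v$ is correctly excluded from these neighbour counts. A subtle point worth double-checking is the boundary case $w \in E(u)$ with $d_w = d_u$: here the old contribution $|d_u^2 - d_w^2| = 0$ and the new one is $(d_u+1)^2 - d_u^2 = 2d_u+1 \ge 0$, so $E(u)$ indeed behaves like $L(u)$, confirming that grouping $l_u + e_u$ together is correct. Once these sign determinations are pinned down, the rest is the routine algebraic simplification above.
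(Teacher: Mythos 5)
Your proposal is correct and follows essentially the same route as the paper: decompose the difference $A\!^*\!(G+uv)-A\!^*\!(G)$ into the new edge's term plus the changes over $N(u)$ and $N(v)$, resolve the absolute values using the partition $N(u)=L(u)\cup E(u)\cup G(u)$ (and likewise for $v$), and simplify via $l_u+e_u+g_u=d_u$. Your write-up is in fact more explicit than the paper's about the sign analysis (including the boundary case $d_w=d_u$) and about why $v\notin N(u)$ matters, but the underlying argument is identical.
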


\begin{proof}
We consider the difference
\begin{align*}
A\!^*\!(G+uv) - A\!^*\!(G)
&=(d_u +1)^2 - (d_v +1)^2 \\
& \quad + \sum_{x\in N(u)} \bigg( \big|(d_u +1)^2 - (d_x)^2\big| - \big|(d_u)^2 - (d_x)^2\big|\bigg)\\
& \quad + \sum_{y\in N(v)} \bigg( \big|(d_v +1)^2 - (d_y)^2\big| - \big|(d_v)^2 - (d_y)^2\big|\bigg)\,.
\end{align*}
Now, using the facts $N(u)=L(u)\cup E(u)\cup G(u)$, $N(u)=L(v)\cup E(v)\cup G(v)$ and then after simplifying, we arrive at
\begin{align*}
A\!^*\!(G+uv) - A\!^*\!(G)
&=(d_u - d_v)(d_u +d_v +2)+ (2d_u+1)(e_u +l_u - g_u)\\
& \quad + (2d_v+1)(e_v +l_v - g_v)\,,
\end{align*}
which is equivalent to
\begin{align*}
A\!^*\!(G+uv) - A\!^*\!(G)
&=3d_u(d_u+1) + d_v(d_v-1) -2[(2d_u+1)g_u + (2d_v+1)g_v]\,.
\end{align*}

\end{proof}

\begin{prop}\label{cor-new-1}
The modified Albertson index $A\!^*$ of every graph is a non-negative even integer.

\end{prop}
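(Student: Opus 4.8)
The non-negativity is immediate, since $A^*(G)$ is a sum of absolute values and hence $A^*(G)\ge 0$. The real content of the statement is the evenness, and because Lemma \ref{thm-new-1} was just established precisely for this purpose, the plan is to prove evenness by induction on the number of edges of $G$, using the edge-addition formula to track parity as edges are inserted one at a time.

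For the base case I would take the edgeless graph on the vertex set of $G$, for which $A^*=0$ is trivially even. For the inductive step, assume $A^*(G)$ is even and let $uv$ be a non-adjacent pair with $d_u\ge d_v$ (degrees taken in $G$). Lemma \ref{thm-new-1} gives
\[
A^*(G+uv)-A^*(G)=3d_u(d_u+1)+d_v(d_v-1)-2\big[(2d_u+1)g_u+(2d_v+1)g_v\big],
\]
and the key observation is that each summand on the right-hand side is even: $d_u(d_u+1)$ and $d_v(d_v-1)$ are each products of two consecutive integers and so are even regardless of the parity of $d_u$ or $d_v$, while the last term carries an explicit factor of $2$. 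Hence the increment $A^*(G+uv)-A^*(G)$ is even, and therefore $A^*(G+uv)$ is even as well.

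Building $G$ from the edgeless graph by adding its edges in any fixed order then keeps $A^*$ even at every stage, so $A^*(G)$ is a non-negative even integer. I do not anticipate any serious obstacle: the only delicate point is the parity bookkeeping in the inductive step, where one must notice that the structural quantities $g_u,g_v$ enter only through a term already multiplied by $2$, so they cannot affect the parity and the increment is forced to be even independently of the local neighbourhood structure. As a remark, one can also bypass the induction entirely by observing that $d^2\equiv d\pmod 2$, whence $A^*(G)\equiv\sum_{uv\in E(G)}(d_u+d_v)=\sum_{w\in V(G)}d_w^2\equiv\sum_{w\in V(G)}d_w=2|E(G)|\equiv 0\pmod 2$; but the inductive route is the one naturally suggested by the preceding lemma.
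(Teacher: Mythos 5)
Your proof is correct and follows essentially the same route as the paper: induction on the number of edges starting from the edgeless graph, using Lemma \ref{thm-new-1} to see that the increment $3d_u(d_u+1)+d_v(d_v-1)-2[(2d_u+1)g_u+(2d_v+1)g_v]$ is even (you in fact spell out the parity bookkeeping more explicitly than the paper does). Your closing remark, that $d^2\equiv d\pmod 2$ gives $A^*(G)\equiv\sum_{w}d_w=2|E(G)|\pmod 2$ directly, is a valid and shorter alternative, but it is not the argument the paper uses.
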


\begin{proof}
Let $G$ be any graph. By definition, $A\!^*(G)\ge0$ with equality if and only if every component of $G$ is regular. The result obviously holds if $G$ is the complete graph and hence we assume that $G$ is not isomorphic to a complete graph.
We prove the result by induction on the number of edges of $G$. If $G$ is the edgeless graph then $A\!^*\!(G)=0$ and hence the induction starts. Let $u$ and $v$ be non-adjacent vertices of $G$ such that $d_u\ge d_v$. Then, by Lemma \ref{thm-new-1}, it holds that
\begin{equation}\label{Eq-cor-new-1}
A\!^*\!(G+uv) = A\!^*\!(G) + 3d_u(d_u+1) + d_v(d_v-1) -2[(2d_u+1)g_u + (2d_v+1)g_v]\,.
\end{equation}
By induction hypothesis, $A\!^*\!(G)$ is even and hence from Equation \eqref{Eq-cor-new-1}, it follows that $A\!^*\!(G+uv)$ is even. This completes the induction and hence the proof.

\end{proof}

\noindent
{\bf Transformation 1.} Let $uv$ be an edge of a graph $G$ satisfying $d_u=d_v=3$. Let $G'$ be the graph obtained from $G$ by inserting a new
vertex $x\not\in V(G)$ of degree 2 on the edge $uv$.\\

Finally, we prove that there exist infinitely many connected graphs whose modified Albertson index is $2t$ for every integer $t\in\{0,3,4,5\}\cup\{8,9,10,\cdots\}$. For this, we need the following two lemmas whose proofs are straightforward.

\begin{lem} \label{lem-new2}
If $G$ and $G'$ are the two graphs specified in Transformation 1, then $A\!^*\!(G') = A\!^*\!(G)+10$.
\end{lem}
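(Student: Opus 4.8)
The plan is to compute the difference $A\!^*\!(G') - A\!^*\!(G)$ directly by isolating the edges whose contributions change under the subdivision. The key observation is that inserting a degree-2 vertex $x$ on the edge $uv$ leaves the degree of every vertex of $G$ unchanged: in particular $u$ and $v$ retain degree $3$, since each simply trades its incidence with the old edge $uv$ for an incidence with one of the new edges $ux$, $vx$. Consequently, for every edge $e$ of $G$ other than $uv$, both endpoints of $e$ keep the same degrees in $G'$ as in $G$, so the contribution of $e$ to $A\!^*$ is identical in the two graphs.

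It remains to account for the three edges that differ between $G$ and $G'$. First I would note that the deleted edge $uv$ contributed $|(d_u)^2 - (d_v)^2| = |3^2 - 3^2| = 0$ to $A\!^*\!(G)$. Next, the two new edges $ux$ and $vx$ each join a degree-$3$ vertex to the degree-$2$ vertex $x$, so each contributes $|3^2 - 2^2| = 5$ to $A\!^*\!(G')$. Summing these changes gives $A\!^*\!(G') - A\!^*\!(G) = -0 + 5 + 5 = 10$, which is the claimed identity.

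There is essentially no obstacle here; the entire content is the locality observation that only the subdivided edge and the two edges replacing it have altered endpoint degrees. The one point that must be checked carefully is precisely this invariance of all vertex degrees under the transformation, since if either $u$ or $v$ changed degree then the contributions of their other incident edges would have to be re-examined. Because the hypothesis $d_u = d_v = 3$ guarantees that the subdivision preserves $d_u = d_v = 3$ in $G'$, no such re-examination is needed and the computation stays purely local.
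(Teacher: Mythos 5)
Your proof is correct; the paper omits the argument entirely (declaring it ``straightforward''), and your local computation --- the subdivided edge $uv$ contributed $|9-9|=0$ while the two new edges $ux$, $vx$ each contribute $|9-4|=5$, with all other edge contributions unchanged because no vertex degree changes --- is exactly the intended justification. Nothing further is needed.
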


\begin{lem} \label{lem-new1}
Let $uv$ be an edge of a graph $G$ satisfying one of the following conditions
\begin{enumerate}
  \item $d_u=1$ and $d_v\ge 2$;
  \item at least one of the vertices $u,v$ has degree 2.
\end{enumerate}
If $G'$ is the graph obtained from $G$ by inserting a new
vertex $x\not\in V(G)$ of degree 2 on the edge $uv$, then $A\!^*\!(G') = A\!^*\!(G)$.
\end{lem}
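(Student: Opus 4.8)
The plan is to observe that the transformation is purely local in its effect on the degree sequence: inserting the degree-$2$ vertex $x$ on the edge $uv$ replaces the edge $uv$ by the path $u$--$x$--$v$, so $u$ and $v$ each trade their mutual edge for an edge to $x$ and therefore retain their original degrees, while every other vertex is untouched. Consequently the summands of $A\!^*$ contributed by all edges other than the deleted edge $uv$ are identical in $G$ and $G'$, and the sole change is that the single term $|(d_u)^2-(d_v)^2|$ present in $A\!^*\!(G)$ is replaced by the two terms coming from the new edges $ux$ and $xv$. Since $d_x=2$, so that $(d_x)^2=4$, this gives the master identity
\[
A\!^*\!(G') - A\!^*\!(G) = \big|(d_u)^2 - 4\big| + \big|(d_v)^2 - 4\big| - \big|(d_u)^2 - (d_v)^2\big|,
\]
and it remains only to verify that the right-hand side vanishes under each hypothesis.

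First I would dispose of condition (1), where $d_u=1$ and $d_v\ge 2$. Here $(d_u)^2=1$ and $(d_v)^2\ge 4$, so the three absolute values unfold as $|1-4|=3$, then $(d_v)^2-4$, and finally $(d_v)^2-1$; substituting into the master identity yields $3 + \big[(d_v)^2-4\big] - \big[(d_v)^2-1\big] = 0$. For condition (2) I would exploit the fact that the master identity is symmetric in $u$ and $v$, so that without loss of generality I may take the endpoint of degree $2$ to be $u$. Then $(d_u)^2=4$, the term $\big|(d_u)^2-4\big|$ vanishes, and the remaining two terms $\big|(d_v)^2-4\big|$ and $\big|(d_u)^2-(d_v)^2\big|=\big|4-(d_v)^2\big|$ are equal and cancel, so the difference is again $0$.

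There is no genuine obstacle here; the whole argument is a short computation. The only point that calls for any care is the unfolding of the three absolute values, and indeed the two hypotheses are framed precisely so as to fix the sign of each difference of squares appearing in the master identity. Once one records that subdividing an edge by a degree-$2$ vertex leaves all degrees invariant, the result drops out immediately in both cases.
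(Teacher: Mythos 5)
Your proof is correct, and the paper itself omits a proof of this lemma (declaring it straightforward); your local computation of $A\!^*\!(G')-A\!^*\!(G)$ as $\big|(d_u)^2-4\big|+\big|(d_v)^2-4\big|-\big|(d_u)^2-(d_v)^2\big|$, followed by the two case checks, is exactly the intended argument.
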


\begin{prop}\label{thm-new-4}
For every integer $t\in\{0,3,4,5\}\cup\{8,9,10,\cdots\}$, there exist infinitely many connected graphs whose $A\!^*$ value is $2t$.

\end{prop}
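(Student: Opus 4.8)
The plan is first to reduce the claim, for each admissible $t$, to exhibiting a \emph{single} connected graph $G$ with $A\!^*\!(G)=2t$ that possesses either a pendant vertex or a vertex of degree $2$. Given such a $G$, Lemma \ref{lem-new1} manufactures infinitely many: if $x\ell$ is a pendant edge with $d_\ell=1$ and $d_x\ge2$, then repeatedly subdividing it (condition (1) at the first step, condition (2) thereafter) yields an infinite family of pairwise non-isomorphic connected graphs with the same $A\!^*$ value; and if $G$ merely has an edge with a degree-$2$ endpoint, condition (2) alone suffices. The one value needing no such device is $2t=0$, since the cycles $C_n$ $(n\ge3)$ already form an infinite family with $A\!^*\!(C_n)=0$. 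Hence it remains only to produce one witness per value $2t>0$.

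For most values I would work inside a connected $3$-regular graph $H$ (these exist in all large sizes), in which every edge is of type $(3,3)$ and contributes $0$. On an edge $su$ with $d_s=d_u=3$ there are two useful moves. Move B is Transformation 1: subdividing $su$ by a degree-$2$ vertex raises $A\!^*$ by $10$ (Lemma \ref{lem-new2}). Move A subdivides $su$ by a vertex $x$ and then hangs a pendant $\ell$ on $x$, so $d_x=3$; the edge $su$ (contributing $0$) is replaced by $sx,xu,x\ell$ contributing $|9-9|+|9-9|+|9-1|=8$. Both moves keep $d_s=d_u=3$ and alter no other edge, so on a sufficiently large $H$ I may apply Move A to $j$ edges and Move B to $m$ further distinct edges, independently, producing a connected graph with $A\!^*=8j+10m$. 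Letting $j,m$ range over the non-negative integers realises the numerical semigroup $\langle8,10\rangle=\{0,8,10,16,18,20,24,26,28,\dots\}$, that is, $0,8,10$ together with every even integer $\ge16$ except $22$. Each such graph carries a pendant vertex when $j\ge1$ and a degree-$2$ vertex when $m\ge1$, so the reduction of the opening paragraph applies.

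Two admissible values fall outside $\langle8,10\rangle$, namely $6$ and $22$. The value $6$ is free, since $A\!^*\!(P_n)=6$ for every $n\ge3$. The value $22$ is the crux. Using only degrees $1,2,3$ it cannot be attained: the edge types $(1,2),(1,3),(2,3)$ contribute $3,8,5$, the equation $3\alpha+8\beta+5\gamma=22$ has just the solutions $(\alpha,\beta,\gamma)\in\{(4,0,2),(3,1,1),(2,2,0)\}$, and a short counting argument (comparing the pendant vertices forced into the degree-$\le2$ part against the at most two $(2,3)$-edges available to connect that part to the degree-$3$ part) rules out each in a connected graph. I would therefore invoke degree $4$: take a connected $4$-regular graph $F$, subdivide one edge $su$ by a vertex $x$, and attach a pendant $\ell$ to $x$, making $d_x=3$. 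The type-$(4,4)$ edge $su$ is then replaced by $sx,xu,x\ell$ contributing $|16-9|+|9-16|+|9-1|=7+7+8=22$, while every other edge stays of type $(4,4)$; thus $A\!^*\!(F')=22$. Since $\ell$ is pendant, Lemma \ref{lem-new1} again produces infinitely many.

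Putting these together, cycles handle $2t=0$, paths handle $6$, the $3$-regular moves handle every remaining admissible value except $22$, and the $4$-regular gadget handles $22$, with Lemma \ref{lem-new1} promoting each single witness to an infinite family. The step I expect to demand the most care is $22$: both confirming that it is genuinely the only admissible value missing from $\langle8,10\rangle$ (a finite check, since $24,26,28,30,32$ already lie in the semigroup and adding $10$ propagates upward) and verifying that the degree-$4$ gadget contributes exactly $22$ and nothing else. A secondary obligation is to ensure that every witness indeed retains an edge satisfying the hypotheses of Lemma \ref{lem-new1}, which is what legitimises the passage to infinitely many graphs.
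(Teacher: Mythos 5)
Your proposal is correct and follows essentially the same strategy as the paper: start from a cubic graph, realise values via a $+8$ move (subdivide and attach a pendant) and the $+10$ move of Transformation 1, treat $6$ and $22$ separately via $P_3$ and the subdivided-$K_5$-with-pendant gadget, and invoke Lemma \ref{lem-new1} to pass to infinitely many graphs. Your organisation via the numerical semigroup $\langle 8,10\rangle$ is a cleaner and more verifiable bookkeeping of which values are covered than the paper's figure-based list of graphs $H_{i,j}$, but the underlying constructions coincide.
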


\begin{figure}[ht]
		\centering
			\includegraphics[width=1.0\textwidth]{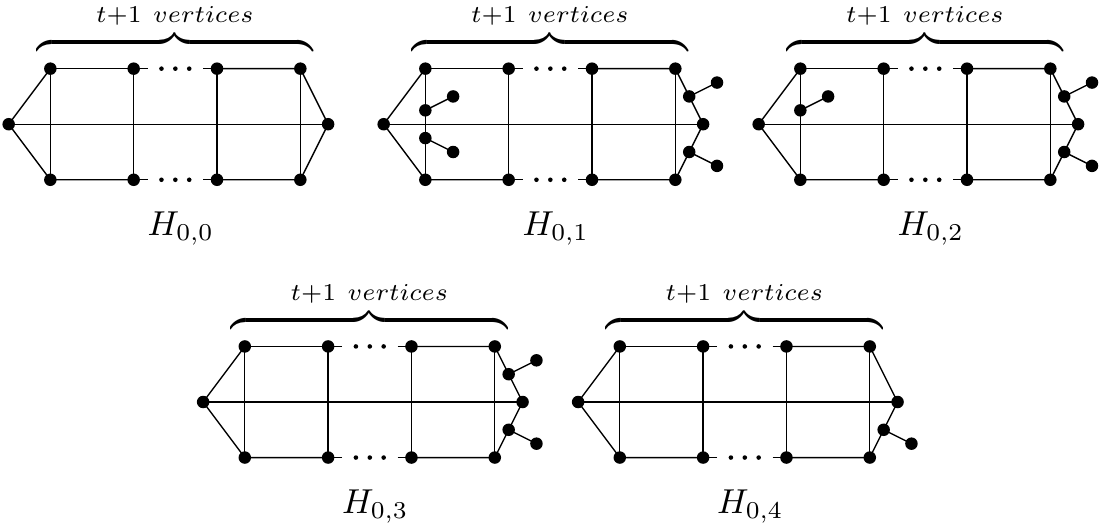}
		\caption{The graphs $H_{0,0}$, $H_{0,1}$, $H_{0,2}$, $H_{0,3}$ and $H_{0,4}$, used in the proof of Proposition \ref{thm-new-4}.}
  \label{fig1}	
  \end{figure}

\begin{proof}

Let $H_{0,0}$ be the cubic graph shown in Figure \ref{fig1}. Obviously, $H_{0,0}$ has $3(t+2)$ edges and its $A\!^*$ value is 0.
Also, we consider the graphs $H_{0,1}$, $H_{0,2}$, $H_{0,3}$ and $H_{0,4}$ (which are obtained from $H_{0,0}$) depicted in Figure \ref{fig1}; their $A\!^*$ values are 32, 24, 16 and 8, respectively. For $j=0,1,2,3,4$ and $1\le i < 3(t+2)$, let $H_{i,j}$ be the graph obtained from $H_{i-1,j}$ by applying Transformation 1. Then,
\[
 A\!^*\!(H_{i,j})= \begin{cases}
    10i & \hbox{if $j=0$;} \\
    2(5i-4j+20) & \hbox{otherwise.}
  \end{cases}
\]
We yet need to find the graphs with $A\!^*$ values 22 and 6. The $A\!^*$ value of the 3-vertex path graph $P_3$ is 6. Let $H$ be the graph obtained from the 5-vertex complete graph $K_5$ by inserting a new vertex $x\not\in V(K_5)$ of degree 2 on an edge of $K_5$. If $H'$ is the graph obtained from $H$ by attaching a new vertex $y\not\in V(H)$ to the vertex $x\in V(H)$, then $A\!^*\!(H')=22$. Until now, we have found a single graph having modified Albertson index $2t$ for each $t\in\{0,3,4,5\}\cup\{8,9,10,\cdots\}$. Now, by using the transformation specified in Lemma \ref{lem-new1}, we get infinitely many graphs with the same $A\!^*$ value, corresponding to each of the graphs $H_{i,j}$, $P_3$, $H'$.




\end{proof}

\end{document}